\newtheorem{thm}{Theorem}[section]
\newtheorem{cor}[thm]{Corollary}
\newtheorem{lem}[thm]{Lemma}
\theoremstyle{definition}
\newtheorem{rem}[thm]{Remark}
\numberwithin{equation}{section}
\begin{document}

\baselineskip=17pt

\title[Deformation of involution and multiplication in a $C^*$-algebra]{Deformation of involution and multiplication in a $C^*$-algebra}

\author[H. Najafi, M.S. Moslehian]{H. Najafi and M. S. Moslehian}
\address{Department of Pure Mathematics, Center of Excellence in
Analysis on Algebraic Structures (CEAAS), Ferdowsi University of
Mashhad, P. O. Box 1159, Mashhad 91775, Iran}
\email{hamednajafi20@gmail.com} \email{moslehian@um.ac.ir and
moslehian@member.ams.org}

\date{}

\begin{abstract}
We investigate the deformation of involution and multiplication in a
unital $C^*$-algebra when its norm is fixed. Our main result is to
present all multiplications and involutions on a given $C^*$-algebra
$\mathcal{A}$ under which $\mathcal{A}$ is still a $C^*$-algebra
whereas we keep the norm unchanged. For each invertible element
$a\in\mathcal{A}$ we also introduce an involution and a
multiplication making $\mathcal{A}$ into a $C^*$-algebra in which
$a$ becomes a positive element. Further, we give a necessary and
sufficient condition for that the center of a unital $C^*$-algebra
$\mathcal{A}$ is trivial.
\end{abstract}

\subjclass[2010]{Primary 46L05; Secondary 46L10.}

\keywords{$C^*$-algebra; von Neumann algebra; involution;
multiplication; positive element, center, double commutant.}

\maketitle

\section{Introduction}

A $C^*$-algebra is a complex Banach $*$-algebra $\mathcal{A}$
satisfying $ \|a^*a\|= \|a\|^2$ $(a\in \mathcal{A})$. By the
Gelfand--Naimark theorem, a $C^*$-algebra is a norm closed
$*$-subalgebra of $\mathbb{B}(\mathcal{H})$ for some Hilbert space
$\mathcal{H}$. A strongly closed $*$-subalgebra of
$\mathbb{B}(\mathcal{H})$ containing the identity operator is called
a von Neumann algebra. By the double commutant theorem a unital
$*$-subalgebra $\mathcal{A}$ of $\mathbb{B}(\mathcal{H})$ is a von
Neumann algebra if and only if $\mathcal{A}$ is equal to its double
commutant $\mathcal{A}^{cc}$, where $\mathcal{A}^{c}=\{B \in
\mathbb{B}(\mathcal{H}): AB=BA \mbox{~for all~} A\in \mathcal{A}\}$.
By Sakai's characterization of von Neumann algebras, $\mathcal{A}$
is a von Neumann algebra if and only if it is a $W^*$-algebra, i.e.
it is a $C^*$-algebra being the norm dual of a Banach space
$\mathcal{A}_*$. Throughout the paper $\mathcal{A}$ denotes an
arbitrary $C^*$-algebra and $\mathcal{Z}(\mathcal{A})$ stands for its center. \\
For a self adjoint element $a\in \mathcal{A}$, it holds that $r(a)=
\|a\|$, where $r(a)$ denotes the spectral radius of $\mathcal{A}$.
This implies that the norm of a $C^*$-algebra is unique when we fix
the involution and the multiplication. Indeed, if $\mathcal{A}$ is a
$C^*$-algebra under two norms $\|.\|_1$ and $\|.\|_2$, then
$\|a\|_1=\|a^*a\|_1^{\frac{1}{2}} = r(a^*a)^{\frac{1}{2}}=
\|a^*a\|_2^{\frac{1}{2}} = \|a\|_2$
for all $a\in \mathcal{A}$.\\
Bohnenblust and Karlin \cite{B-K} showed that there is at most one
involution on a Banach algebra with the unit $1$ making it into a
$C^*$-algebra (see also \cite{RIC}): Let $*$ and $\#$ be two
involutions on a unital Banach algebra $\mathcal{A}$ making it into
$C^*$-algebras. Let $x \in \mathcal{A}$. It follows from the fact
``an element $x$ of a unital $C^*$-algebra is self-adjoint if and
only if $\tau(x)$ is real for every bounded linear functional $\tau$
on $\mathcal{A}$ with $\parallel \tau
\parallel =\tau(1)=1$ (\cite[Proposition 4.3.3]{Ka-Ri1})'' that $x$ is self-adjoint with respect to $*$ if and
only if it is self-adjoint with respect to $\#$. Now let $a \in
\mathcal{A}$ be arbitrary and $a=a_1+{\rm i}a_2$ with self-adjoint
parts $a_1, a_2$ with respect to $*$. Then $a_1^*=a_1^\#$ and
$a_2^*=a_2^\#$ and $a^*=a_1-{\rm i}a_2=a^\#$. There is another way
to show the uniqueness of the involution. Indeed if $*$ and $\#$ be
two involutions on a unital Banach algebra $\mathcal{A}$ making it
into $C^*$-algebras, then the identity map from $(\mathcal{A}, *)$
onto $(\mathcal{A},\#)$ is positive (see \cite[Proposition
2.11]{PAU}) and so $a^{*}=a^{\#}$ for all $a\in \mathcal{A}$.

There are several characterizations of $C^*$-algebras among
involutive Banach algebras, see \cite{TOM} in which the authors
start with a $C^*$-algebra and modify its structure. We however
investigate a different problem in the same setting. In fact we
investigate the deformation of involution and multiplication in a
unital $C^*$-algebra when its norm is fixed. Our main result is to
present all multiplications $\circ$ and involutions $\star$ on a
given $C^*$-algebra $\mathcal{A}$ under which $\mathcal{A}$ is still
a $C^*$-algebra whereas we keep the norm unchanged. As an
application, for each invertible element $a\in\mathcal{A}$ we
introduce an involution and a multiplication making $\mathcal{A}$
into a $C^*$-algebra in which $a$ becomes a positive element.
Further, we give a necessary and sufficient condition for that the
center of a unital $C^*$-algebra $\mathcal{A}$ is trivial.

Recall that a Jordan $*$-homomorphism is a self-adjoint map
preserving squares of self-adjoint operators. Jacobson and Rickart
\cite{Ja} showed that for every Jordan $*$-homomorphism $\rho$ of a
 $C^*$-algebra $\mathcal{A}$ with the unit $1$ into a von Neumann algebra
$\mathcal{B}$ there exist central projections $p_1, p_2\in
\mathcal{B}$ such that $\rho=\rho_1+\rho_2$, $\rho(1)=p_1+p_2$,
$\rho_1(a)=\rho(a)p_1$ is a $*$-homomorphism and
$\rho_2(a)=\rho(a)p_2$ is a $*$-antihomomorphism. Kadison \cite{Ka}
showed that an isometry of a unital $C^*$-algebra onto another
$C^*$-algebra is a Jordan $*$-isomorphism.

\section{Results}
We start our work with the following lemma.

\begin{lem}\label{1-1}
Let $\mathcal{A}$ be a unital $C^*$-algebra of operators acting on a
Hilbert space $\mathcal{H}$. Let $p\in \mathcal{A}$ be a central
projection and $u\in \mathcal{A}$ be a unitary. Let $\circ$ be the
multiplication and $\star$ be the involution defined on
$\mathcal{A}$ by
\begin{align}\label{11}
a\circ b=p a u b + (1-p) b u a \quad{\rm ~ and ~}\quad a^\star&=u^*
a^* u^*
\end{align}
for $a,b \in \mathcal{A}$, respectively. Then $\mathcal{A}$ equipped
with the multiplication $\circ$ and the involution $\star$ is a
unital $C^*$-algebra.
\end{lem}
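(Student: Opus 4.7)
The plan is to verify the $C^*$-algebra axioms for $(\mathcal{A}, \circ, \star, \|\cdot\|)$ in order: unit, associativity, involution, submultiplicativity, and finally the $C^*$-identity. Throughout I use that $p$ is central (so $p$ commutes with $u$ and every element of $\mathcal{A}$, $p^2 = p$, $p(1-p) = 0$, and $p^* = p$) and that $u$ is unitary.

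For the algebraic axioms, I first check that $u^*$ is the unit: $a \circ u^* = pauu^* + (1-p)u^*ua = pa + (1-p)a = a$, and likewise $u^* \circ a = a$. Associativity is a direct expansion in which the cross terms vanish thanks to $p(1-p) = 0$ and $p^2 = p$, yielding $(a\circ b)\circ c = paubuc + (1-p)cubua = a \circ (b\circ c)$. Antilinearity of $\star$ is immediate; $(a^\star)^\star = u^*(u^* a^* u^*)^* u^* = u^* u a u u^* = a$; and a similar computation using $p^* = p$ and centrality gives $(a \circ b)^\star = pu^* b^* u^* a^* u^* + (1-p)u^* a^* u^* b^* u^* = b^\star \circ a^\star$.

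For the norm, the essential input is the direct sum decomposition $\mathcal{A} = p\mathcal{A} \oplus (1-p)\mathcal{A}$ induced by the central projection $p$, under which $\|x\| = \max(\|px\|, \|(1-p)x\|)$ for every $x\in\mathcal{A}$. Since $p(a \circ b) = paub$ and $(1-p)(a\circ b) = (1-p)bua$, submultiplicativity follows from the trivial bounds $\|paub\|, \|(1-p)bua\| \le \|a\|\|b\|$. For the $C^*$-identity, expansion gives $a^\star \circ a = pu^* a^* a + (1-p) a a^* u^*$, so the two summands live in the complementary ideals $p\mathcal{A}$ and $(1-p)\mathcal{A}$, in which $pu^*$ and $(1-p)u^*$ are unitaries. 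Since one-sided multiplication by a unitary is isometric, and $pa^* a = (pa)^*(pa)$ and $(1-p)aa^* = ((1-p)a)((1-p)a)^*$ by centrality of $p$, I obtain $\|pu^* a^*a\| = \|pa^*a\| = \|pa\|^2$ and $\|(1-p)aa^* u^*\| = \|(1-p)aa^*\| = \|(1-p)a\|^2$. Therefore $\|a^\star \circ a\| = \max(\|pa\|^2, \|(1-p)a\|^2) = \|a\|^2$, completing the verification.

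I expect the main obstacle to be the $C^*$-identity: computing $\|a^\star \circ a\|$ directly as the norm of the sum $pu^*a^*a + (1-p)aa^* u^*$ is unwieldy, and the argument depends on projecting onto the two orthogonal ideals first so that the unitaries $pu^*$ and $(1-p)u^*$ can be absorbed by isometric one-sided multiplication. The centrality of $p$ is indispensable at this step.
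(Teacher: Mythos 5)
Your proof is correct and follows essentially the same route as the paper: both arguments reduce the $C^*$-identity to the decomposition $\|x\|=\max(\|px\|,\|(1-p)x\|)$ induced by the central projection, the paper via the $2\times 2$ matrix picture on $\mathcal{H}=p\mathcal{H}\oplus(1-p)\mathcal{H}$ and you via the complementary ideals $p\mathcal{A}$ and $(1-p)\mathcal{A}$, which amounts to the same thing. You are merely more explicit about the algebraic axioms, which the paper dismisses as "easy to check."
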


\begin{proof}
It is easy to check that $\mathcal{A}$ is a complex Banach algebra
under the multiplication $\circ$ and $u^*$ is the unit for this
multiplication. By the decomposition $\mathcal{H}=
p\mathcal{H}\oplus(1-p)\mathcal{H}$, we can represent any element $a
\in\mathcal{A}$ by the $2\times 2$ matrix {\scriptsize $ \left(
 \begin{array}{cc}
  pa & 0 \\
  0 & (1-p)a \\
 \end{array}
\right).$} For $a,b \in \mathcal{A}$, therefore $pa+(1-p)b$ can be
identified by {\scriptsize $\left(
 \begin{array}{cc}
  pa & 0 \\
  0 & (1-p)b \\
 \end{array}
\right)$}, whence $||pa+(1-p)b||=\max(\|p a \|, \|(1-p)b\|)$. Hence
 \begin{eqnarray*}
\|a^\star \circ a\|&=& \|p\ u^* a^* a + (1-p) \ a a^* u^* \|\\
&=&\max(\|p a^* a \|, \| a a^* (1-p)\|)\\
&=&\max(\|p a \|^2, \|(1-p)a \|^2)\\
&=&\max(\|p a \|, \|(1-p)a\|)^2 =\|a\|^2
\end{eqnarray*}
for all $a \in \mathcal{A}$.
\end{proof}

The unital $C^*$-algebra $\mathcal{A}$ equipped with the
multiplication $\circ$ and the involution $\star$ is denoted by
$\mathcal{A}(\circ, \star)$. Next we establish a converse of Lemma
\ref{1-1}.

\begin{thm}\label{12}
Let $\mathcal{A}$ be a unital $C^*$-algebra of operators acting on a
Hilbert space $\mathcal{H}$ and there exist a multiplication $\circ$
and an involution $\star$ on the normed space $\mathcal{A}$ making
it into a $C^*$-algebra.
 Then there exists a unitary element $u\in \mathcal{A}$ and a central projection $p$ in the double commutant
 of $\mathcal{A}^{cc}$ of $\mathcal{A}$ such that both equalities
\eqref{11} hold.
\end{thm}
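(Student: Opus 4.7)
My plan is to identify the unit $e$ of $(\mathcal{A},\circ,\star)$ with a unitary in $(\mathcal{A},\cdot,*)$, twist the identity map by $u=e^*$ into a Jordan $*$-isomorphism, and then decompose that isomorphism via the Jacobson--Rickart theorem to extract the required central projection $p$.

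The first step rests on Kadison's theorem, quoted in the introduction. The identity map is a surjective linear isometry between the two unital $C^*$-algebras $(\mathcal{A},\circ,\star)$ and $(\mathcal{A},\cdot,*)$, since the norm is shared. Letting $e$ denote the unit of the new algebra and $u := e^*$ (in the original involution), the twisted form of Kadison's theorem shows that $u$ is a unitary element of $\mathcal{A}$, and that the map $\phi(a)=au$, from $(\mathcal{A},\circ,\star)$ to $(\mathcal{A},\cdot,*)$ in the original multiplication, is a surjective linear isometry. Since $\phi(e)=e\cdot e^{*}=1$, the map $\phi$ is even a unital isometry, so by Kadison's theorem it is a Jordan $*$-isomorphism.

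The second step uses the Jacobson--Rickart theorem also recalled in the introduction. Viewing $\phi$ as a Jordan $*$-homomorphism into the von Neumann algebra $\mathcal{A}^{cc}$, it decomposes as $\phi=\phi_{1}+\phi_{2}$ with $\phi_{i}(a)=\phi(a)\,p_{i}$, where $p_{1},p_{2}\in\mathcal{A}^{cc}$ are central projections summing to $\phi(e)=1$, $\phi_{1}$ is a $*$-homomorphism, and $\phi_{2}$ is a $*$-antihomomorphism. I take $p=p_{1}$ as the central projection asserted in the theorem.

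It then remains to read the formulas \eqref{11} off $\phi$. The identity $\phi(a^{\star})=\phi(a)^{*}$ reads $a^{\star}u=u^{*}a^{*}$, and right-multiplication by $u^{*}$ yields $a^{\star}=u^{*}a^{*}u^{*}$. For the multiplication, the Jordan decomposition together with the centrality of $p$ in $\mathcal{A}^{cc}$, which makes $p$ commute with $au,bu\in\mathcal{A}\subseteq\mathcal{A}^{cc}$, gives
\[
(a\circ b)\,u \;=\; \phi_{1}(a)\phi_{1}(b)+\phi_{2}(b)\phi_{2}(a) \;=\; p\,aubu+(1-p)\,buau,
\]
and right-multiplication by $u^{*}$ delivers $a\circ b=p\,aub+(1-p)\,bua$. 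The main obstacle, to my mind, is the combined orchestration of Kadison's and Jacobson--Rickart's theorems so that a single unitary $u$ governs the involution while a single central projection $p\in\mathcal{A}^{cc}$ controls the multiplicative splitting; once this structural picture is in place, the verification of the explicit formulas is a direct calculation driven by the centrality of $p$ and by $u^{*}u=uu^{*}=1$.
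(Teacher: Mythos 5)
Your argument is correct and rests on the same two pillars as the paper's proof (Kadison's theorem on surjective isometries and the Jacobson--Rickart decomposition of Jordan $*$-homomorphisms), but your route through the second pillar is genuinely different and in fact cleaner. The paper first builds an auxiliary von Neumann algebra $\mathcal{A}^{cc}(\diamond,\star)$ with the twisted product $a\diamond b=aub$, applies Jacobson--Rickart to the identity map $\mathcal{A}(\circ,\star)\to\mathcal{A}^{cc}(\diamond,\star)$ to obtain a projection $p'$ central \emph{for the twisted structure}, and then must separately verify that $p=p'u$ is an idempotent, is self-adjoint, and is central in $\mathcal{A}^{cc}$ for the \emph{original} structure (the last step requiring a second Jordan isomorphism $\theta(a)=au$ and \cite[Corollary 1]{Ja}). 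By instead decomposing your map $\phi(a)=au$, which lands in the untwisted von Neumann algebra $\mathcal{A}^{cc}$, you get the central projection $p=p_1\in\mathcal{A}^{cc}$ directly from Jacobson--Rickart and the translation back to the formula for $\circ$ is a one-line computation; you also recover the involution formula from $\phi(a^\star)=\phi(a)^*$ rather than from the paper's separate positivity argument for $a\mapsto u^*a$. The one point you pass over in silence is why $(\mathcal{A},\circ,\star)$ has a unit $e$ at all: this is not a hypothesis, and the paper supplies the missing line --- since $\mathcal{A}$ is unital its closed unit ball has an extreme point, and a $C^*$-algebra whose closed unit ball has an extreme point is unital. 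Add that sentence (and, if you want to be scrupulous, note that the version of Kadison's theorem you invoke is the one for \emph{unital} surjective isometries, which is why you check $\phi(e)=eu=u^*u=1$) and the proof is complete.
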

\begin{proof}
Since $\mathcal{A}$ is unital, the closed unit ball of $\mathcal{A}$
has an extreme point,
 hence the $C^*$-algebra $\mathcal{A}(\circ,\star)$ is unital. Since $\iota(x)=x$ is an isometric linear map of
$\mathcal{A}$ onto $\mathcal{A}(\circ,\star)$, the unitary elements
of $\mathcal{A}(\circ,\star)$ and those of $\mathcal{A}$ coincide
\cite[Exercise 7.6.17]{Ka-Ri}. Thus if $u^*$ is the unit of
$\mathcal{A}(\circ,\star)$, then $u$ is a unitary of $\mathcal{A}$.
Define $\rho:\mathcal{A}\rightarrow \mathcal{A}(\circ,\star)$ by
$\rho(a)=u^*a$. Clearly $\rho$ is a unital isometric linear map of
$\mathcal{A}$ onto $\mathcal{A}(\circ,\star)$. Hence $\rho$ is a
positive map. This implies that $u^*a^*=\rho(a^*)=(u^*a)^{\star}$
and so $a^{\star}= u^*a^*u^*$.\\
For determining the multiplication, define a multiplication
$\diamond$ on $\mathcal{A}^{cc}$ (with respect to the original
multiplication) by \eqref{11} as $p=1$.
 Then $\mathcal{A}^{cc}$ with the multiplication $\diamond$ is a $C^*$-algebra. The space $\mathcal{A}^{cc}$ as a Banach space is already the dual of a
 Banach space, so $\mathcal{A}^{cc}$ with the new product and the new involution is a von Neumann algebra. Then the map $\rho(x)=x$ is a unital isometric linear map
 of $\mathcal{A}(\circ,\star)$ into the von Neumann algebra $\mathcal{A}^{cc}(\diamond,\star)$.
  By the result of Kadison \cite{Ka} it is a Jordan $*$-isomorphism and by the Jacobson and Rickart theorem \cite{Ja}
  there exists a central projection $p^{'}$ in $\mathcal{A}^{cc}(\diamond,\star)$
   such that $\rho_1(x)= p^{'} \diamond \rho(x)$ is a
$*$-homomorphism and $\rho_2(x)= (u^*-p^{'})\diamond \rho(x)$ is a
$*$-antihomomorphism. Therefore for each $a,b\in \mathcal{A}$ we
have
 \begin{align*}
 a\circ b&=\rho(a\circ b)\\
 &= \rho_1(a\circ b)+\rho_2(a\circ b)\\
 &=p^{'} \diamond \rho_1(a) \diamond \rho_1(b) +(u^*-p^{'})\diamond \rho_2(b) \diamond \rho_2(a)\\
 &=p^{'} \diamond a \diamond b +(u^*-p^{'})\diamond b \diamond a.\\
 &=p^{'} u a u b +(u^*-p^{'})u b u a\\
 &=p^{'} u a u b +(1-p^{'}u) b u a.
\end{align*}
 Let $p=p^{'}u$. Since $(p^{'}u)^2= p^{'}u p^{'}u 1= p^{'} \diamond (p^{'} \diamond 1)=p^{'}
\diamond 1=p^{'}u$ and $(p^{'}u)^* = u^* p^{'*}= u^* p^{'*} u^* u=
p^{'\star}u = p^{'}u$,
 so $p$ is a projection in $\mathcal{A}^{cc}$.
 A similar argument shows that $\theta:\mathcal{A}^{cc}(\diamond,\star)\rightarrow \mathcal{A}^{cc}$
 defined by $\theta(a)=a u$ is a Jordan $*$-isomorphism. So,
 by \cite[Corollary 1]{Ja} , $\theta(\mathcal{Z}(\mathcal{A}^{cc}(\diamond,\star)))=\mathcal{Z}(\theta(\mathcal{A}^{cc}(\diamond,\star)))$.
 Therefore $pa=\theta(p^{'})\theta(au^*)=\theta(au^*)\theta(p^{'})=ap$ for each $a\in \mathcal{A}$.
 Hence $p$ is a central projection in $\mathcal{A}^{cc}$.
\end{proof}
\begin{rem}
Note that in general case, a $C^*$-algebra $\mathcal{A}$ has many
representations. However the proof of Theorem \ref{12} shows that
for any representation of $\mathcal{A}$, we can present all
multiplications and involutions on $\mathcal{A}$ which keep it still
a C*-algebra with the same norm by a unitary and a central
projection in the double commutant with respect to the same
representation. Further, since $p$ in Theorem \ref{12} is in
$\mathcal{A}^{cc}\subseteq \mathbb{B}(\mathcal{H})$, it depends on
$\mathcal{H}$. If $\mathcal{A}$ is a von Neumann algebra, then $p
\in \mathcal{A}^{cc}=\mathcal{A}$.
\end{rem}
 \begin{cor}
Let $\mathcal{I}$ be an ideal of a von Neumann algebra
$\mathcal{A}$. Then $\mathcal{I}$ is also an ideal of the
$C^*$-algebra $\mathcal{A}(\circ,\star)$ for any multiplication
$\circ$ and any involution $\star$.
 \end{cor}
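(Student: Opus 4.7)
The plan is to invoke Theorem~\ref{12} to bring the unknown multiplication and involution into the standard form
\[
a\circ b = p\,a\,u\,b + (1-p)\,b\,u\,a, \qquad a^{\star} = u^{*}a^{*}u^{*},
\]
where $u\in\mathcal{A}$ is unitary and $p$ is a central projection in the double commutant $\mathcal{A}^{cc}$. The essential simplification in the von Neumann setting is that $\mathcal{A}^{cc}=\mathcal{A}$, so that $p$ actually lies in $\mathcal{A}$ itself. This is precisely where the hypothesis that $\mathcal{A}$ is a von Neumann algebra, rather than a general $C^*$-algebra, enters.

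With $p,u\in\mathcal{A}$ in hand, I would verify the ideal property by direct expansion. For arbitrary $a\in\mathcal{A}$ and $i\in\mathcal{I}$, write
\[
a\circ i = (pau)\,i + (1-p)\,i\,(ua), \qquad i\circ a = (piu)\,a + \bigl((1-p)au\bigr)\,i.
\]
Every bracketed factor belongs to $\mathcal{A}$, so each of the four summands lies in $\mathcal{I}$ by the two-sided ideal property of $\mathcal{I}$ under the original multiplication. Thus $\mathcal{I}$ is two-sided in $\mathcal{A}(\circ,\star)$. If one additionally wants closure under $\star$, which is automatic for norm-closed ideals of a $C^*$-algebra since the norm is unchanged, then $i^{\star}=u^{*}i^{*}u^{*}\in\mathcal{I}$ follows at once from $i^{*}\in\mathcal{I}$ and $u^{*}\in\mathcal{A}$.

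There is no genuine technical obstacle: the content of the corollary is packaged entirely into Theorem~\ref{12}, after which only a two-line algebraic check remains. The only subtle point worth flagging is the hidden role of the von Neumann hypothesis. Without it the projection $p$ could live strictly outside $\mathcal{A}$, in which case the multiplier $(1-p)$ would not preserve $\mathcal{I}$ and the absorption step for the second summand in each expansion above would fail; so the hypothesis is used in an essential way, not merely for convenience.
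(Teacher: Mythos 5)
Your proof is correct and follows essentially the same route as the paper: apply Theorem~\ref{12}, use $\mathcal{A}^{cc}=\mathcal{A}$ to place $p$ (and $u$) inside $\mathcal{A}$, and absorb each summand of $a\circ i$ and $i\circ a$ into $\mathcal{I}$ term by term. Your additional checks (both sides of the ideal property, closure under $\star$, and the explicit remark on where the von Neumann hypothesis is used) are harmless elaborations of the paper's one-line verification.
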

\begin{proof}
It is sufficient to note that $p a u b$ and $(1-p) b u a$ belong to
$\mathcal{I}$ when $a\in \mathcal{A}, b\in \mathcal{I}$ and so
 $a\circ b=p a u b + (1-p) b u a\in \mathcal{I}$.
\end{proof}

It is easy to see that $a \circ b = b \circ a$ if and only if
$aub=bua$. We therefore have
 \begin{cor}\label{13}
Suppose that $\mathcal{A}$ is a unital $C^*$-algebra and the normed
space $\mathcal{A}$ is equipped with a multiplication $\circ$
 and an involution $\star$ is a $C^*$-algebra with the unit $u^*$, where $u\in \mathcal{A}$ is a unitary. Then\\
 (i) $\mathcal{A}$ is commutative if and only if so is $\mathcal{A}(\circ,\star)$.\\
 (ii) $\mathcal{Z}(\mathcal{A})= \mathbb{C}1$ if and only if $\mathcal{Z}(\mathcal{A}(\circ,\star))= \mathbb{C}u^*$.
  \end{cor}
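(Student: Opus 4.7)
The plan is to bootstrap both parts from a single structural identity: the map $c \mapsto cu$ is a linear bijection from $\mathcal{Z}(\mathcal{A}(\circ,\star))$ onto $\mathcal{Z}(\mathcal{A})$, equivalently
\[
\mathcal{Z}(\mathcal{A}(\circ,\star)) \;=\; \mathcal{Z}(\mathcal{A})\,u^{*}.
\]
Once this is in hand, (i) and (ii) fall out immediately by inspecting the two extreme cases (center equal to the whole algebra, or one dimensional).

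To establish the identity I would proceed element-wise. By the remark immediately preceding the corollary, $c \in \mathcal{Z}(\mathcal{A}(\circ,\star))$ iff $cua = auc$ for every $a \in \mathcal{A}$. Setting $a = 1$ forces $cu = uc$, so $c$ commutes with $u$; reinserting this into the displayed equation yields $(cu)a = a(cu)$ for every $a$, i.e.\ $cu \in \mathcal{Z}(\mathcal{A})$. Conversely, if $z := cu \in \mathcal{Z}(\mathcal{A})$, then $z$ automatically commutes with the element $u \in \mathcal{A}$, so $cu = uc$, and then $cua = za = az = auc$. Right multiplying by $u^{*}$ converts this element-wise equivalence into the displayed set equality. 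For (i), commutativity is the statement that the center equals the whole algebra, and since right multiplication by $u^{*}$ is a linear bijection of $\mathcal{A}$, the equation $\mathcal{Z}(\mathcal{A})u^{*} = \mathcal{A}$ is equivalent to $\mathcal{Z}(\mathcal{A}) = \mathcal{A}$; for (ii), in the same way $\mathcal{Z}(\mathcal{A})u^{*} = \mathbb{C}u^{*}$ is equivalent to $\mathcal{Z}(\mathcal{A}) = \mathbb{C}1$.

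I do not anticipate a genuine obstacle here. The hint preceding the corollary already encapsulates the exotic product $\circ$ in purely original-multiplication terms, and the only subtle point is noticing that any $z \in \mathcal{Z}(\mathcal{A})$ commutes with the unitary $u \in \mathcal{A}$, which is precisely what makes the correspondence $c \leftrightarrow cu$ invertible between the two centers.
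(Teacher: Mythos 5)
Your proof is correct. The key identity $\mathcal{Z}(\mathcal{A}(\circ,\star))=\mathcal{Z}(\mathcal{A})u^{*}$ is established cleanly: the specialization $a=1$ in $cua=auc$ to get $cu=uc$, and then the observation that $cu$ is central, is exactly the right elementary move, and the converse direction works because any central $z$ commutes with $u$ in particular. Both (i) and (ii) then follow since right multiplication by the unitary $u^{*}$ is a bijection of $\mathcal{A}$. The paper instead treats the two parts separately: (i) is done by a direct computation with the explicit formula $a\circ b=paub+(1-p)bua$ under the assumption that $\mathcal{A}$ is commutative, with the converse obtained by interchanging the roles of the two algebras; (ii) is proved by passing to the Jordan $*$-isomorphism $\theta(a)=au$ from the proof of Theorem \ref{12} and invoking the Jacobson--Rickart result that Jordan isomorphisms preserve centers, which yields $au\in\mathcal{Z}(\mathcal{A})=\mathbb{C}1$. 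Your route buys two things: it replaces the Jordan-isomorphism machinery in (ii) with a two-line algebraic computation that only uses the equivalence $c\circ a=a\circ c\iff cua=auc$ (which, as in the paper, ultimately rests on Theorem \ref{12} through the preceding remark), and it unifies (i) and (ii) as instances of a single, more informative statement about how the center transforms. The only dependency worth making explicit is that the remark's equivalence uses the structure theorem for $\circ$, so your argument, like the paper's, is genuinely a corollary of Theorem \ref{12} rather than of the bare hypotheses.
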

 \begin{proof}
(i) Let $\mathcal{A}$ be commutative. By Theorem \ref{12} there
exist a unitary element $u\in \mathcal{A}$ and a central projection
$p$
  in $\mathcal{A}^{cc}$ such that
 \begin{align*}
a\circ b&=p a u b + (1-p) b u a \ \ \ \ \ \ \ \  \  \ (a,b\in
\mathcal{A} ).
 \end{align*}
Hence
 \begin{align*}
a \circ b=p aub+(1-p)bua = p bua + (1-p)aub= b\circ a\,.
 \end{align*}
Therefore $\mathcal{A}(\circ,\star)$ is commutative. Changing the role of $\mathcal{A}$ by $\mathcal{A}(\circ,\star)$, we reach the reverse assertion.\\
(ii) Let $\mathcal{Z}(\mathcal{A})= \mathbb{C}1$. If $a\in
\mathcal{Z}(\mathcal{A}(\circ,\star))$, then for any $b\in
\mathcal{A}$ we have $a \circ b= b\circ a$. As in the proof of
Theorem \ref{12} we observe that
$\theta:\mathcal{A}^{cc}(\circ,\star)\rightarrow \mathcal{A}^{cc}$
defined by $\theta(a)=a u$ is a Jordan $*$-isomorphism. Hence, by
\cite[Corollary 1]{Ja} , $\theta(b)\theta(a)=\theta(a)\theta(b)$, so
$aubu=buau$. Since each element of $\mathcal{A}$ is of the form $bu$
for some $b\in \mathcal{A}$, it follows that $au\in
\mathcal{Z}(\mathcal{A})$. Hence $au=\lambda1$ for some $\lambda \in
\mathbb{C}$. Therefore $\mathcal{Z}(\mathcal{A}(\circ,\star))=
\mathbb{C}u^*$. Similarly we can deduce the converse.
  \end{proof}
\begin{rem}
The Arens product on $(c_0)^{**}=l^{\infty}$ coincide with the usual
product in $l^{\infty}$ \cite[Example 2.6.22]{Da}. It was extended
to arbitrary $C^*$-algebras in \cite{BD}. We reprove the fact in our
own way: Let $\mathcal{A}$ be a $C^*$-algebra and its second dual
$\mathcal{A^{**}}$ be also a $C^*$-algebra under a multiplication
$(a,b)\longmapsto a\cdot b$ whose restriction to $\mathcal{A}\times
\mathcal{A}$ is the same multiplication of $\mathcal{A}$. We shall
show that the Arens product (denoted by $\diamond$) on
$\mathcal{A^{**}}$ is the same as the multiplication $\cdot$ on
$\mathcal{A^{**}}$. It is known that $\mathcal{A^{**}}$ is a von
Neumann algebra under the Arens multiplication \cite[Theorem
3.2.37]{Da}. By the Kaplansky density theorem, $\mathcal{A}$ is
dense in $\mathcal{A^{**}}$ in the weak$^*$-topology, so there
exists a net $u_{\alpha}$ in $\mathcal{A}$ such that
$u_{\alpha}\rightarrow 1$ in the weak$^*$-topology in which $1$
denotes the unit of $\mathcal{A^{**}}$. So
\begin{align*}
b= w^*-\lim_{\alpha}u_{\alpha}b= w^*-\lim_{\alpha} u_{\alpha}
\diamond b= 1 \diamond b
\end{align*}
for each $b\in \mathcal{A}$. The Kaplansky density theorem implies
that $1 \diamond x=x$ for each $x \in \mathcal{A^{**}}$. Therefore
the units of both multiplications $\cdot$ and $\diamond$ are same.
By Theorem \ref{12} there exist a central projection $p\in
\mathcal{A}$ such that
\begin{align*}
x \diamond y =p x y + (1-p) y x,
\end{align*}
for each $x,y\in \mathcal{A^{**}}$. On the other hand for each
$a,b\in \mathcal{A}$, we have $a\diamond b = ab$. So
$(1-p)ab=(1-p)ba$. Since $\mathcal{A}$ is dense in
$\mathcal{A^{**}}$ in the weak$^*$-topology, we have
$(1-p)xy=(1-p)yx$ for each $x,y \in \mathcal{A^{**}}$. Therefore $x
\diamond y =pxy + (1-p) y x= p x y + (1-p)xy = x y$ for each $x,y
\in \mathcal{A^{**}}$. For instance, we deduce that the Arens
product on $\mathbb{K}(\mathcal{H})^{**}=\mathbb{B}(\mathcal{H})$ is
equal to the operator multiplication on $\mathbb{B}(\mathcal{H})$.
\end{rem}

 \begin{thm}
 Let $\mathcal{A}$ be a unital $C^*$-algebra. Then the following assertions are equivalent:\\
 (i) $\mathcal{Z}(\mathcal{A})= \mathbb{C}1$\\
 (ii) If for invertible operators $a,b\in \mathcal{A}$, $ ||axb||=||x||$ holds for each $x\in \mathcal{A}$,
  then there exists $\lambda > 0$ such that both $\lambda a$ and $\frac{1}{\lambda} b$ are unitary.
 \end{thm}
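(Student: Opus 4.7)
My plan for both directions is to exploit the surjective linear isometry $\phi:\mathcal{A}\to\mathcal{A}$ given by $\phi(x)=axb$, along the lines of the proof of Theorem~\ref{12}.

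For (i)$\Rightarrow$(ii), assume $\mathcal{Z}(\mathcal{A})=\mathbb{C}1$ and $\|axb\|=\|x\|$ for every $x\in\mathcal{A}$. The first step is to show that $\phi(1)=ab$ is a unitary of $\mathcal{A}$: transporting the $C^{*}$-structure of $\mathcal{A}$ across $\phi$ yields a second $C^{*}$-algebra structure on $\mathcal{A}$ with the same norm and with unit $ab$, and by \cite[Exercise 7.6.17]{Ka-Ri} (as used in the proof of Theorem~\ref{12}) the unitary groups for the two $C^{*}$-structures coincide, so $ab$ is unitary in $\mathcal{A}$. The map $\rho(x):=(ab)^{-1}\phi(x)=b^{-1}xb$ is then a unital surjective linear isometry of $\mathcal{A}$ onto itself, so by Kadison's theorem \cite{Ka} it is a Jordan $*$-isomorphism. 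Expanding $\rho(x^{*})=\rho(x)^{*}$ gives $b^{-1}x^{*}b=b^{*}x^{*}(b^{*})^{-1}$ for all $x$, which rearranges to $(bb^{*})y=y(bb^{*})$ for every $y\in\mathcal{A}$. Hence $bb^{*}\in\mathcal{Z}(\mathcal{A})=\mathbb{C}1$, and since $bb^{*}$ is positive and invertible there is $\lambda>0$ with $bb^{*}=\lambda^{2}\cdot 1$. From $b^{*}=\lambda^{2}b^{-1}$ one also obtains $b^{*}b=\lambda^{2}\cdot 1$, so $b/\lambda$ is unitary. Finally $\lambda a=(ab)(b/\lambda)^{*}$ is a product of two unitaries, hence unitary.

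For the converse I would argue contrapositively. If $\mathcal{Z}(\mathcal{A})\neq\mathbb{C}1$, decomposing a non-scalar central element into its self-adjoint and skew-adjoint parts (both central) produces a self-adjoint central $y\notin\mathbb{R}1$. Choosing $c>\|y\|$, the element $a:=y+c\cdot 1$ is positive, invertible, central, and not a scalar multiple of $1$. Setting $b:=a^{-1}$, centrality yields $axb=xaa^{-1}=x$, so $\|axb\|=\|x\|$ for every $x\in\mathcal{A}$. However, if (ii) held, then some $\lambda>0$ would make $\lambda a$ unitary; since $\lambda a$ is simultaneously positive and since any positive unitary equals $1$, this would force $a\in\mathbb{C}1$, contradicting the choice of $a$.

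The only genuine delicacy is the application of Kadison's theorem when $\phi(1)=ab$ is not the identity, but this is handled by the same unitary-preservation device already used in Theorem~\ref{12}; no new idea is required. Once $ab$ is known to be unitary, the remaining work amounts to routine algebraic rearrangements around $b^{-1}xb$ together with the elementary fact that a positive unitary must be $1$.
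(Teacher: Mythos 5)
Your argument is correct, but it organizes both implications differently from the paper. For (i)$\Rightarrow$(ii) the paper first reduces to positive $a,b$ via the polar decompositions $a=u|a|$, $b=|b^*|v$, then derives $\|a^{-1}xa\|\le\|x\|$ and invokes the positivity of unital contractions \cite[Proposition 2.11]{PAU} to conclude that $x\mapsto a^{-1}xa$ is a $*$-map, hence $aa^*\in\mathcal{Z}(\mathcal{A})=\mathbb{C}1$; you instead prove directly that $\phi(1)=ab$ is unitary (by the same transport-of-structure and unitary-preservation device as in Theorem \ref{12}) and then apply Kadison's theorem \cite{Ka} to the unital surjective isometry $x\mapsto b^{-1}xb$ to place $bb^*$ in the center. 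The underlying mechanism is identical --- a unital surjective isometry is a $*$-map, so conjugation by $b$ being a $*$-map forces $bb^*$ central --- but your route avoids the reduction to the positive case and the norm-inequality chain $\|a^{-1}xa\|\le\|axb\|\,\|b^{-1}a^{-1}\|$, and it yields the unitarity of $\lambda a=(ab)(b/\lambda)^*$ for free as a product of unitaries. For (ii)$\Rightarrow$(i) the paper argues directly: every central invertible element is a scalar multiple of a unitary, and a functional-calculus construction (an invertible $f$ with $f(\lambda_1)=\tfrac12$, $f(\lambda_2)=1$) forces every positive central element to have one-point spectrum. Your contrapositive, which produces a non-scalar positive invertible central $a$ with $b=a^{-1}$ and uses only that a positive unitary equals $1$, is more economical and equally valid. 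A small remark: the appeal to Kadison's theorem could be replaced by the paper's lighter tool, namely that a unital contraction is positive and hence self-adjoint, applied to $\rho$ and $\rho^{-1}$; but nothing is wrong as written.
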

 \begin{proof}
 ${\rm (i)} \Rightarrow {\rm (ii)}$ Note that if $||a^{-1}xa||\leq ||x||$ for each $x\in \mathcal{A}$,
 then map $\varphi(x)=a^{-1}xa$
 is a contractive unital linear map on $\mathcal{A}$ .
 It follows from \cite[Proposition 2.11]{PAU} that $\varphi$ is positive.
 Therefore $(a^{-1}xa)^*=a^{-1}x^{*}a$ and so $aa^{*} x^*=x^* a a^*$ for each $x\in \mathcal{A}$.
 Hence $aa^* \in \mathcal{Z}(\mathcal{A})=\mathbb{C}1$. So $a^*a=\lambda 1$ for some $\lambda >0$.
 Therefore $\frac{1}{\sqrt{\lambda}} a$ is unitary. First, assume that $||a x b|| = ||x||$ for positive invertible
  operators $a,b$ and each $x\in \mathcal{A}$. Then $||b^{-1} a^{-1}||=||a^{-1}b^{-1}||= ||a a^{-1}b^{-1}b||=1$,
  whence
 $$||a^{-1} x a || \leq ||a x b || \ || b^{-1} a^{-1}|| \leq ||x||.$$
 Therefore there exists $\lambda > 0$ such that $\frac{1}{\lambda} a$ is unitary.
 Since $\frac{1}{\lambda} a$ is positive and unitary we have $a=\lambda$.
 A similar argument shows that $b=\lambda^{'}$.
 It follows from $1=||1||=||ab||=\lambda^{'}\lambda$ that $\lambda= \frac{1}{\lambda^{'}}$.
 Second, assume that $||axb||=||x||$ for invertible operators $a,b$ and each $x\in \mathcal{A}$.
  Utilizing the polar decompositions of $a$ and $b^*$, there exist unitary operators $u,v$
  such that $a=u|a|$ and $b=|b^*|v$. Hence $|| \ |a| \ x \ |b^*| \ ||=||u\ |a| \ x \ |b^*| \ v ||=||axb||=||x||$
  for each $x\in \mathcal{A}$. The above argument shows that $|a|=\lambda$ and $|b^*|=\frac{1}{\lambda}$
  for some $\lambda>0$, so $a= \lambda u$ and $b= \frac{1}{\lambda} v$.\\
 ${\rm (ii)} \Rightarrow {\rm (i)}$ Note that each central invertible element $a$ of $\mathcal{A}$ is a scalar multiple
 of a unitary element. In fact, we have $||a^{-1}xa||=||a^{-1}ax||=||x||$ for all $x\in \mathcal{A}$,
 so $\lambda a$ is unitary for some $\lambda>0$. Let $a\in \mathcal{Z}(\mathcal{A})$ be a positive element
 and $\lambda_1,\lambda_2 \in {\rm sp}(a)$ are distinct. Then there exists an invertible continuous
 function $f$ on ${\rm sp}(a)$ such that $f(\lambda_1)= \frac{1}{2}$ and
 $f(\lambda_2)=1$.
 Hence $f(a)$, which is a central invertible element should be a scalar multiple of a unitary. On the other
 hand, $\frac{1}{2},1 \in {\rm{sp}}(f(a))$, which is impossible. Hence the spectrum of $a$ is singleton, so $a=||a||$.
 Since $\mathcal{Z}(\mathcal{A})$ is a $C^*$-algebra, any one of its elements is a linear combination of four positive elements.
 Therefore $\mathcal{Z}(\mathcal{A})= \mathbb{C}1$.
 \end{proof}

Let $\mathcal{A}(u,p)$ denote the $C^*$-algebra given via Lemma
\ref{1-1} corresponding to a unitary $u$ and a central projection
$p$ in $\mathcal{A}$. The self-adjoint elements of
$\mathcal{A}(u,p)$ are all elements $a$ such that $au=u^*a^*$, a
fact which is independent of the choice of $p$. Also a self-adjoint
element $a$ is positive in $\mathcal{A}(u,p)$ if and only if
$a=b\circ b=pbub+(1-p)bub=bub$ for some self-adjoint element $b \in
\mathcal{A}(u,p)$ and this occurs if and only if $a$ is positive in
$\mathcal{A}(u,1)$.

\begin{thm}\label{15}
Let $\mathcal{A}$ be a $C^*$-algebra and $a\in \mathcal{A}$ be
invertible. Then there exists a
 unique unitary $u\in \mathcal{A}$ such that $a$ is a positive element of the $C^*$-algebra $\mathcal{A}(u^*,p)$
 for any central projection $p\in\mathcal{A}$.
\end{thm}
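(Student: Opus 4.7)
The plan is to reduce both halves of the theorem to the classical polar decomposition of the invertible element $a$. The paragraph preceding the theorem tells us that positivity in $\mathcal{A}(u^*,p)$ does not depend on the central projection $p$, so it suffices to work with $p$ arbitrary. Substituting $u\mapsto u^*$ in the characterizations given there, an element $a$ is self-adjoint in $\mathcal{A}(u^*,p)$ iff $au^*=ua^*$, and a self-adjoint $a$ is positive iff $a=bu^*b$ for some $b$ satisfying $b=ub^*u$.

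For \emph{existence}, I would take $u:=a|a|^{-1}$, the unitary part of the polar decomposition $a=u|a|$ (which is genuinely unitary since $a$ is invertible). Setting $b:=u|a|^{1/2}$, a one-line check gives $ub^*u=u|a|^{1/2}u^*u=u|a|^{1/2}=b$, so $b$ is self-adjoint in $\mathcal{A}(u^*,p)$, and
\[
bu^*b \;=\; u|a|^{1/2}\,u^*u\,|a|^{1/2} \;=\; u|a| \;=\; a,
\]
so $a$ is positive in $\mathcal{A}(u^*,p)$ for every central projection $p$.

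For \emph{uniqueness}, suppose a unitary $v$ makes $a$ positive in $\mathcal{A}(v^*,p)$, so $a=cv^*c$ for some $c$ with $c=vc^*v$. Put $d:=v^*c$, so $c=vd$; then $c=vc^*v$ reduces to $d=d^*$, while the positivity identity becomes $a=vd\cdot v^*\cdot vd=vd^2$. Since $d$ is self-adjoint and $a$ is invertible, $d^2$ is a positive invertible element of $\mathcal{A}$, and $a=vd^2$ is \emph{a} polar decomposition of $a$. The uniqueness of the polar decomposition of an invertible element then forces $v=a|a|^{-1}=u$ (and incidentally $d^2=|a|$).

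The main (and really only) obstacle is the bookkeeping at the start: one needs to see that the positivity criterion transplanted from the preceding paragraph, after rewriting via the substitution $b=uc$ (respectively $c=vd$), is exactly the requirement that $a$ factors as a unitary times a positive element. Once this reformulation is in place, both halves of the theorem fall out of the polar decomposition.
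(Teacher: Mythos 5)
Your proposal is correct and follows essentially the same route as the paper: existence by writing $a=u|a|$ and exhibiting $a$ as a square in the deformed algebra, and uniqueness by reducing to the uniqueness of the polar decomposition of an invertible element. The only cosmetic difference is that you realize $a$ as $b\circ b$ with $b=u|a|^{1/2}$ self-adjoint in $\mathcal{A}(u^*,p)$, whereas the paper writes $a=|a|^{1/2\,\star}\circ|a|^{1/2}$ and phrases the uniqueness step as the observation that $|a|$ and $(v^*u)|a|$ are both positive; these are interchangeable.
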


\begin{proof}
Let $a=u|a|$ be the polar decomposition of $a$. Then $u=a |a|^{-1}
\in \mathcal{A}$.
 So $a= u |a|^{\frac{1}{2}}|a|^{\frac{1}{2}}= |a|^{\frac{1}{2}\star}\circ |a|^{\frac{1}{2}}$,
 where $\circ$ is defined in $\mathcal{A}(u^*,1)$ by \eqref{11}.
So $a$ is positive in $\mathcal{A}(u^*,p)$ for every central
projection $p\in\mathcal{A}$.
 To see the uniqueness, note that if $a$ is invertible and $a, wa$ are positive for a unitary $w$, then
$a = w^{*} (wa)$. By the uniqueness of polar decomposition, we have
$w=1$. Now if $a$ is positive in $\mathcal{A}(v^*,1)$, then
$a=b^{\star}\circ b= v b^* b$. Hence $v^*u|a|=v^*a = b^* b$ is
positive. Therefore $v^*u|a|$ and $|a|$ are positive and so $v=u$
according to what we just proved.
\end{proof}
\begin{rem}
 The invertibility condition in Proposition \ref{15} is essential. For example let
 $\mathcal{A}=C[-1,1]$ and $f(t)=t$. If $f$ is positive in $C[-1,1](u,1)$ for a unitary function $u$, then
 there exist $g\in C[-1,1]$ such that $t=f(t)=u(t)|g(t)|^2$ for each $t\in [-1,1]$. So
 $u(t)=1$ for each $t\in (0,1]$ and $u(t)=-1$ for each $t\in [-1,0)$, which is impossible.
\end{rem}

\subsection*{Acknowledgements}
The authors would like to sincerely thank Professor Marcel de Jeu and Professor Jun Tomiyama for their
valuable comments.

\end{document}